\renewcommand{\d}{\mathrm{d}}
\newcommand{\loc}{\mathrm{loc}}
\newcommand{\Div}{\operatorname{div}}
\newcommand{\R}{\mathbb{R}}
\newcommand{\eps}{\varepsilon}
\newcommand{\fai}{\varphi}
\newcommand{\br}[1]{\left( #1 \right)}
\renewcommand{\emptyset}{\varnothing}
\setlist{itemsep=0pt,parsep=0pt,partopsep=0pt,topsep=0pt}
\setlist[enumerate,1]{label=(\alph*)}
\setlist[enumerate,2]{label=(\roman*)}
\setlist[enumerate,3]{label=(\arab*)}
\declaretheoremstyle[
headfont=\bfseries,
notefont=\bfseries,
bodyfont=\itshape,
spaceabove=1em,
spacebelow=1em,
notebraces={(}{)},
postheadspace=1ex,
headpunct={.},
headindent=0pt
]{thm}
\declaretheorem[title=Theorem, style=thm, numberwithin=section]{theorem}
\declaretheorem[title=Lemma, style=thm, numberwithin=section]{lemma}
\declaretheorem[sibling=theorem, title=Corollary, style=thm]{corollary}
\declaretheorem[title=Remark, style=thm, numberwithin=section]{remark}
\declaretheoremstyle[
headfont=\itshape,
notefont=\itshape,
bodyfont=\normalfont,
spaceabove=1em,
spacebelow=1em,
notebraces={(}{)},
postheadspace=1ex,
headpunct={.},
headindent=0pt
]{rmk}
\declaretheoremstyle[
headfont=\bfseries,
notefont=\bfseries,
bodyfont=\normalfont,
spaceabove=1em,
spacebelow=1em,
notebraces={(}{)},
postheadspace=1ex,
headpunct={.},
headindent=0em
]{exam}
\newcommand\zhmgs[1]{{\bfseries #1} }
\def\zhm{\@ifstar{\zhmm[Proof]}{\@ifnextchar[{\zhmm}{\zhmm[Proof.]}}}
\def\zhmm[#1]{\@ifnextchar[{\zhmmm[#1]}{\zhmgs{#1}}}
\def\zhmmm[#1][#2]{\zhmgs{#1\ (#2)}}
\renewenvironment{proof}
{\addvspace{1em} \noindent\zhm}
{\hfill $\square$ \par \ignorespacesafterend \addvspace{1em}}
\begin{document}

\title{\bfseries Estimate for the first Dirichlet eigenvalue of $p-$Laplacian on non-compact manifolds}
\author{Xiaoshang Jin\thanks{The 1st-named author was supported by {NNSF of China
			\# 12201225}},
        Zhiwei L\"u}

\date{}

	\maketitle

    \begin{abstract}
        In this paper, we establish a sharp lower bound for the first Dirichlet eigenvalue of the $p$-Laplacian on bounded domains of a complete, non-compact Riemannian manifold with non-negative Ricci curvature.
    \end{abstract}

    \section{Introduction}
    Assume that $(M,g)$ is a complete Riemannian manifold and $p>1$.  Let us use $\Delta_p$ to denote the $p$-Laplace operator, i.e.
    \[
    \Delta_p u={\rm div}(|\nabla u|^{p-2}\nabla u),\ \ \ u\in W^{1,p}_{\rm loc}(M).
    \]
    Here the equality is in the weak $W^{1,p}_{\rm loc}$ sense:
    $$
    \Delta_p u=f\ \Leftrightarrow\ \forall \varphi\in C^\infty_0(M),\ -\int_M |\nabla u|^{p-2}g(\nabla\varphi,\nabla u)\,\d\upsilon_g=\int_M \varphi f\,\d\upsilon_g
    $$ If there exists a constant $\lambda$ and a function $u\neq 0$ such that
    \[
    \Delta_p u=-\lambda|u|^{p-2} u,
    \]
    then we say that $\lambda$ is a eigenvalue of $\Delta_p$ with eigenfunction $u.$ There are lots of researches concerning the first $p-$eigenvalue of closed manifolds and compact manifolds with Neumann condition if $\partial M\neq \emptyset,$ especially  under assumption of a lower Ricci curvature bound. (See e.g.,     \cite{matei2000first}\cite{kawai2003first}\cite{zhang2007lower}\cite{valtorta2012sharp}\cite{naber2014sharp} ).
    \par In this paper, we are interested in the Dirichlet problem. More concretely, let $\Omega$ be a bounded domain of $(M,g)$ with piecewise smooth boundary $\partial\Omega\neq\emptyset.$ We denote the first positive eigenvalue as $\lambda_{1,p}(\Omega)$ with the Dirichlet condition:
    \[\begin{cases}
     \Delta_p u=-\lambda_{1,p}(\Omega)|u|^{p-2} u\ & {\rm in}\ \Omega;\\
     u=0 & {\rm on}\ \partial\Omega.
     \end{cases}
    \]
    This eigenvalue has the following variational characterization:
    \begin{equation}\label{eq:1.1}
        \lambda_{1,p}(\Omega) = \inf_{v \in W^{1,p}_{0}(\Omega) \setminus \{0\}} \frac{\int_\Omega |\nabla v|^p \,\d\upsilon_g}{\int_\Omega |v|^p \,\d\upsilon_g}.
    \end{equation}
    The main result of this paper is a sharp lower bound for $\lambda_{1,p}(\Omega)$ under the condition that $(M,g)$ is a non-compact and  non-negative Ricci curved Riemannian manifold:
    \begin{theorem}\label{thm:1.1}
      Let $(M,g)$ be a complete non-compact manifold with ${\rm Ric}\geq 0.$ Then for any bounded domain $\Omega\subseteq M$ and $p>1,$
        \begin{equation}\label{eq:1.2}
            \lambda_{1,p}(\Omega) > (p-1)\br{\frac{\pi_p}{2d}}^p
        \end{equation}
        where $d={\rm diam}(\Omega)$ is the diameter of $\Omega$ and
        \[
            \pi_p := 2\int_0^1 \frac{1}{(1-t^p)^{\frac 1p}} \,\d t = \frac{2\pi}{p\sin \frac{\pi}{p}}.
        \]
        Moreover, the estimate is sharp.
    \end{theorem}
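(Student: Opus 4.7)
The plan is to combine a Valtorta--Naber style one-dimensional gradient comparison with the existence of a ray in $M$ afforded by non-compactness. First I would pick a first Dirichlet eigenfunction $u$: by the strong maximum principle there is a positive $u$ in $\Omega$ with $u|_{\partial\Omega}=0$ satisfying $-\Delta_p u = \lambda u^{p-1}$ (here $\lambda = \lambda_{1,p}(\Omega)$), normalized so that $\max_\Omega u = u(p_0)=1$ at an interior point $p_0$. As the one-dimensional comparison model I would take $w:[0,T]\to[0,1]$ solving
\[
    (|w'|^{p-2}w')' = -\lambda\,w^{p-1},\qquad w(0)=1,\ w'(0)=0,\ w(T)=0,
\]
strictly decreasing on $[0,T]$. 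A standard first-integral (energy) computation for this ODE gives
\[
    T = \left(\frac{p-1}{\lambda}\right)^{\!1/p}\frac{\pi_p}{2},
\]
so the estimate \eqref{eq:1.2} reduces to proving $T<d$.

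The heart of the argument is the pointwise gradient comparison
\[
    |\nabla u(x)|\ \le\ |w'(w^{-1}(u(x)))|\qquad\text{for all }x\in\{u>0\}.
\]
I would establish it by applying the refined $p$-Bochner identity together with the linearized $p$-Laplace operator to a test quantity of the form $|\nabla u|^p - \psi(u)$, with $\psi$ tuned so that the one-dimensional model $w$ makes the quantity vanish identically; using ${\rm Ric}\ge 0$ and the strong maximum principle on $\{u>0\}\subset\Omega$ then yields the bound. This is the gradient comparison principle of Valtorta--Naber adapted to the Dirichlet regime; no boundary hypothesis on $|\nabla u|$ is needed, since the comparison is driven by the values of $u$ rather than by the normal derivative. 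I expect this step to be the main technical obstacle, as the $p$-Bochner identity has to be handled carefully on the critical set $\{\nabla u=0\}$, where the linearized operator degenerates.

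With the comparison in hand I would invoke non-compactness. Since $(M,g)$ is complete and non-compact, there is a ray $\gamma:[0,\infty)\to M$ with $\gamma(0)=p_0$; because $\Omega$ is bounded, there is a smallest $t_*>0$ with $\gamma(t_*)\in\partial\Omega$, and then $t_* = d(p_0,\gamma(t_*))\le{\rm diam}(\Omega)=d$. Setting $f(t)=u(\gamma(t))$ and $s(t)=w^{-1}(f(t))$ on $[0,t_*]$, the comparison gives
\[
    |s'(t)|\ =\ \frac{|f'(t)|}{|w'(s(t))|}\ \le\ \frac{|\nabla u(\gamma(t))|}{|w'(w^{-1}(f(t)))|}\ \le\ 1,
\]
while $s(0)=0$ and $s(t_*)=T$; integration gives $T = |s(t_*)-s(0)| \le t_* \le d$, which is the non-strict form of \eqref{eq:1.2}.

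To upgrade to a strict inequality I would argue by rigidity: $T=d$ would force $t_*=d$ and $|s'|\equiv 1$ on $[0,d]$, so that $u\circ\gamma\equiv w$ and equality holds in the gradient comparison along all of $\gamma|_{[0,d]}$. The equality cases of the $p$-Bochner inequality (${\rm Ric}(\nabla u,\nabla u)\equiv 0$ along $\gamma$ together with a one-dimensional Hessian) then force an isometric splitting of $M$ near $\gamma$ with $u$ depending only on $d(\cdot,p_0)$, hence $\Omega=B(p_0,d)$; extending the ray past $t=d$ produces points of $\overline{\Omega}$ at separation approaching $2d$, contradicting ${\rm diam}(\Omega)=d$. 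Sharpness of the constant should then follow by exhibiting a sequence of long, thin, teardrop-shaped domains $\Omega_n\subset\R^n$ along which $\lambda_{1,p}(\Omega_n)(2d_n)^p/(p-1)\to\pi_p^p$.
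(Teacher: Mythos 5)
Your reduction of \eqref{eq:1.2} to the statement $T\le d$ for the one-dimensional model is correct, but the gradient comparison $|\nabla u|\le |w'(w^{-1}(u))|$ is a genuine gap, and in fact cannot be true in the generality you assert. The maximum-principle argument for $P=|\nabla u|^p-\psi(u)$ on $\{u>0\}=\Omega$ requires ruling out a maximum of $P$ on $\partial\Omega$, i.e.\ it needs the boundary gradient bound $|\nabla u|\le \bigl(\lambda/(p-1)\bigr)^{1/p}$ on $\partial\Omega$ as an input; this is essentially equivalent to the conclusion and is precisely where \cite{yang1999lower} and \cite{zhang2007lower} invoke mean convexity of the boundary. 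Your remark that ``no boundary hypothesis on $|\nabla u|$ is needed'' is the crux, and it is false. Here is a decisive way to see that something must break: your argument uses non-compactness only to produce a ray from $p_0$ reaching $\partial\Omega$ within arclength $\le d$, but a minimizing geodesic from $p_0$ to a nearest point of $\partial\Omega$ does the same job in \emph{any} complete manifold. So if the gradient comparison held as stated, your proof would give $\lambda_{1,p}(\Omega)\ge(p-1)(\pi_p/2d)^p$ for bounded domains in compact manifolds with ${\rm Ric}\ge0$ --- contradicting the paper's Remark, where spherical caps $M_\eps\subset\mathbb{S}^n$ of bounded diameter have $\lambda_{1,p}(M_\eps)\to0$ for $p\le n$. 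Non-compactness must enter the proof in an essential analytic way, not merely through the existence of a long geodesic. The paper does this by working with the Busemann function $\beta$ of a ray (which satisfies $|\nabla\beta|=1$ a.e.\ and $\Delta\beta\ge0$ by Laplacian comparison) and feeding $f=-\ln\sin_p\bigl(A(a+d-\beta)\bigr)$ into Barta's inequality for the $p$-Laplacian; this bypasses the eigenfunction and all boundary gradient issues.

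Your sharpness sketch also fails: any $\Omega\subset\R^n$ of diameter $d$ lies in a slab of width $d$, whence $\lambda_{1,p}(\Omega)\ge(p-1)(\pi_p/d)^p=2^p\,(p-1)(\pi_p/2d)^p$, so Euclidean domains (teardrop-shaped or otherwise) stay a factor $2^p$ above the bound. The correct extremals are geodesic balls in thin capped cylinders (warped products $\d r^2+f_\eps(r)^2\d\mathbb{S}^{n-1}$ with $f_\eps$ constant for $r\ge\eps$): there the ball contains the entire cross-section, no Dirichlet condition is imposed on the ``sides,'' and the eigenfunction traverses a quarter-period of $\sin_p$ over the full diameter $2$, which is what produces the $2d$ in the denominator.
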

    \begin{remark}
       The estimate of \eqref{eq:1.2} is sharp in the following sense: there exists a sequence of non-compact manifolds $(M_i,g_i)$ and bounded domains $\Omega_i\subseteq M_i$ with ${\rm diam}(\Omega_i)=d$  such that
    $$\lim\limits_{i\rightarrow\infty}\lambda_{1,p}(\Omega_i)= (p-1)\Big(\frac{\pi_p}{2d}\Big)^p
    $$
    \end{remark}
    \begin{corollary}
     Under the conditions of Theorem \ref{thm:1.1}, if $\Omega=B(x,1)$ is a geodesic ball of radius 1 centred at $x,$ then
     $d\leq 2$ and
        \begin{equation}\label{eq:1.3}
            \lambda_{1,p}(B(x,1)) > (p-1)\Big(\frac{\pi_p}{4}\Big)^p
        \end{equation}
        and the estimate is sharp.
    \end{corollary}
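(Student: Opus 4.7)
The proof of the corollary should be almost immediate from Theorem \ref{thm:1.1}, so the plan is mostly to track the monotonicity of the bound and then provide a degenerating family.

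First, I would observe that for any two points $y,z \in B(x,1)$ the triangle inequality gives $d(y,z) \leq d(y,x) + d(x,z) < 2$, so $d := \mathrm{diam}(B(x,1)) \leq 2$. (The inequality is strict since the supremum is over open distances; but $d \leq 2$ is what I use.) Then I would apply Theorem \ref{thm:1.1} to $\Omega = B(x,1)$: since the function $d \mapsto (p-1)(\pi_p/(2d))^p$ is strictly decreasing in $d$, I get
\[
\lambda_{1,p}(B(x,1)) > (p-1)\br{\frac{\pi_p}{2d}}^p \geq (p-1)\br{\frac{\pi_p}{4}}^p,
\]
which is the desired bound \eqref{eq:1.3}.

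For sharpness, I need to exhibit a sequence of complete non-compact manifolds $(M_i,g_i)$ with $\mathrm{Ric}\geq 0$, together with points $x_i\in M_i$, such that $\lambda_{1,p}(B(x_i,1)) \to (p-1)(\pi_p/4)^p$. The natural candidates are thin cylinders $M_i = \R\times (\R/L_i\Z)^{n-1}$ equipped with the product metric where $L_i\to 0$; these are flat, non-compact, and as $L_i\to 0$ the geodesic ball $B(x_i,1)$ collapses to an interval of length $2$, so its first Dirichlet $p$-eigenvalue should approach the one-dimensional value $(p-1)(\pi_p/2)^p$ with $d=2$, i.e.\ $(p-1)(\pi_p/4)^p$. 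The key check here is that the eigenvalue on a thin tube converges to the eigenvalue of the limiting interval; I would verify the upper bound $\lambda_{1,p}(B(x_i,1)) \to (p-1)(\pi_p/4)^p$ by producing an explicit test function in \eqref{eq:1.1} built from the one-dimensional first eigenfunction on $[-1,1]$ (which is essentially a $\sin_p$ function), pulled back via the projection to the $\R$-factor and cut off appropriately near the boundary of the ball.

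The main obstacle, and really the only non-trivial step, is this sharpness construction: I must make sure that the test function I build actually lies in $W^{1,p}_0(B(x_i,1))$ (which requires dealing with the curved boundary of the Euclidean ball versus the flat product interval) and that its Rayleigh quotient converges to $(p-1)(\pi_p/4)^p$ as $L_i\to 0$. A cleaner alternative, if the sharpness sequence in Theorem \ref{thm:1.1} is already constructed so that $\Omega_i$ is itself a unit ball of diameter tending to $2$, would be to simply cite that construction; otherwise the cylinder construction sketched above should work with routine estimates.
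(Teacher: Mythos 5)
The first half of your argument (triangle inequality gives $d\le 2$, then monotonicity of $d\mapsto (p-1)(\pi_p/(2d))^p$ yields \eqref{eq:1.3}) is correct and is exactly how the corollary follows from Theorem \ref{thm:1.1}.

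The sharpness construction, however, has a genuine error. On the thin cylinder $\R\times(\R/L_i\Z)^{n-1}$ the unit ball collapses, as $L_i\to 0$, to the interval $(-1,1)$ with Dirichlet conditions at \emph{both} endpoints, so its first eigenvalue converges to the one-dimensional Dirichlet eigenvalue of an interval of length $2$, namely $(p-1)(\pi_p/2)^p$ --- not $(p-1)(\pi_p/4)^p$. Your own sentence betrays the slip: you write that the limit is ``$(p-1)(\pi_p/2)^p$ with $d=2$, i.e.\ $(p-1)(\pi_p/4)^p$,'' but these two numbers differ by a factor of $2^p$. The bound $(p-1)(\pi_p/(2d))^p$ with $d=\ell$ corresponds to the \emph{mixed} Dirichlet--Neumann problem on an interval of length $\ell$ (eigenfunction $\sin_p$ of a quarter-period), so to approach it you need the collapsing domain to be ``free'' at one end. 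This is precisely what the paper arranges: it takes the warped product $(\R^n,\ \d r^2+f_\eps(r)^2\,\d\mathbb{S}^{n-1})$ with $f_\eps$ constant $=\delta$ for $r\ge\eps$ (a capped, one-ended thin cigar), centres the unit ball at $z_\eps=\exp_o((1-\delta)\theta)$ near the tip, and uses the test function $\sin_p\big(\tau(r-d(x,o))\big)$, which is maximal at the tip and vanishes only at the far boundary; the Rayleigh quotient then converges to $(p-1)(\pi_p/4)^p$ by the Beta-function identity of Lemma \ref{lem:2.2}. Your fallback remark --- cite the sharpness sequence from Theorem \ref{thm:1.1} if it already consists of unit balls --- would indeed rescue the proof, since the paper's $\Omega_\eps=B_{g_\eps}(z_\eps,1)$ are unit balls with diameter tending to $2$; but the two-ended cylinder you propose as the primary route does not achieve the constant.
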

    \begin{remark}
      The condition in Theorem \ref{thm:1.1} that $M$ is non-compact cannot be dropped, since we have a sequence of bounded domain $M_i$ embedded in the standard sphere $\mathbb{S}^n$ satisfying  that $\lambda_{1,p}(M_i) \to 0$ when $p\leq n.$ See Remark 2.1. for more details. As a consequence, we can obtain the following property:
     \end{remark}
      \begin{corollary} \label{coro:1.3}
      For any $n\geq 2,$ there exists a compact domain $D$ strictly contained in an $n-$dimensional manifold $M$ satisfying that $Ric_M\geq 0,$ such that for any non-compact complete manifold $M'$ of dimension $n$ with non-negative Ricci curvature, $D$ is not isometrically embedded in $M'.$
    \end{corollary}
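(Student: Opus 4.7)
The plan is to invoke Theorem \ref{thm:1.1} together with the sharp-domain construction on the round sphere alluded to in Remark 2.1, so that the resulting contradiction precludes an isometric embedding.

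First I would choose $M=\mathbb{S}^n$ with the standard round metric, so that $\mathrm{Ric}_M\geq 0$, and, for concreteness, fix $p=2$, which satisfies $p\leq n$ since $n\geq 2$. Pick a point $p_0\in\mathbb{S}^n$ and for each small $\epsilon>0$ set
\[
    \Omega_\epsilon := \mathbb{S}^n \setminus \overline{B_\epsilon(p_0)}.
\]
Two ingredients need to be verified. (i) A capacity estimate: because a point has zero $2$-capacity in dimension $n\geq 2$, a piecewise-linear radial cutoff of the constant $1$ on the annulus $B_{2\epsilon}\setminus B_\epsilon$ (a logarithmic cutoff in the borderline case $n=2$), inserted into the Rayleigh quotient \eqref{eq:1.1}, shows $\lambda_{1,2}(\Omega_\epsilon)\to 0$ as $\epsilon\to 0$, making Remark 2.1 precise in this case. (ii) A diameter estimate: the intrinsic length-space diameter of $\Omega_\epsilon$ is at most $\pi+2\pi\sin\epsilon$, since any two points of $\Omega_\epsilon$ are joined in $\mathbb{S}^n$ by an arc of length at most $\pi$, and whenever such an arc crosses $\overline{B_\epsilon(p_0)}$ we deform it along $\partial B_\epsilon(p_0)$ at a cost bounded by the geodesic circumference $2\pi\sin\epsilon\leq 2\pi\epsilon$.

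Next I would pick $\epsilon>0$ small enough that simultaneously $\lambda_{1,2}(\Omega_\epsilon)<\tfrac{1}{16}$ and the intrinsic diameter of $\Omega_\epsilon$ is at most $2\pi$, and set $D:=\overline{\Omega_\epsilon}$. Then $D$ is a compact domain strictly contained in $M=\mathbb{S}^n$.

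Finally, suppose for contradiction that there is an isometric embedding $\iota: D\hookrightarrow M'$ into some complete non-compact $n$-manifold $M'$ with $\mathrm{Ric}_{M'}\geq 0$. Because $\dim D=\dim M'$, the image $\iota(\Omega_\epsilon)$ is a bounded domain in $M'$, and the isometry preserves the Rayleigh quotient in \eqref{eq:1.1}, so $\lambda_{1,2}(\iota(\Omega_\epsilon))=\lambda_{1,2}(\Omega_\epsilon)<\tfrac{1}{16}$. Since any two points of $\iota(\Omega_\epsilon)$ can be joined in $M'$ by a curve lying in $\iota(\Omega_\epsilon)$ of length at most the intrinsic diameter of $\Omega_\epsilon$, the extrinsic diameter of $\iota(\Omega_\epsilon)$ in $M'$ is at most $2\pi$. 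Theorem \ref{thm:1.1} applied to $\iota(\Omega_\epsilon)\subset M'$, combined with $\pi_2=\pi$, then yields
\[
    \lambda_{1,2}(\iota(\Omega_\epsilon)) > \br{\frac{\pi}{2\cdot 2\pi}}^2 = \frac{1}{16},
\]
the desired contradiction. The main technical points are simply to make Remark 2.1 precise (a standard $p$-capacity calculation) and to bound the intrinsic diameter of $\Omega_\epsilon$; the conceptual content is that Theorem \ref{thm:1.1} forces a uniform positive lower bound on $\lambda_{1,p}$ in terms of the intrinsic diameter, which has no analogue on $\mathbb{S}^n$ when $p\leq n$.
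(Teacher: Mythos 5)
Your proposal is correct and follows essentially the same route as the paper: the paper's Remark 2.5 takes $D=M_\eps=([0,\pi-\eps]\times\mathbb{S}^{n-1},\d r^2+\sin^2 r\,g_{\mathbb{S}^{n-1}})$, i.e.\ exactly your sphere-minus-a-small-ball, shows $\lambda_{1,p}(M_\eps)\to 0$ for $p\leq n$ with the same linear/logarithmic cutoffs, and contradicts the uniform lower bound from Theorem \ref{thm:1.1}. Your write-up is in fact slightly more careful than the paper's on the intrinsic-versus-extrinsic diameter of the embedded image, but the underlying argument is identical.
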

    Let us recall that in \cite{yang1999lower}, the author proved that $ \lambda_{1,2}(\Omega) \geq\frac{\pi^2}{4r^2}$ for compact manifold $\Omega$ with mean convex boundary $\partial \Omega$ and non-negative Ricci curvature where $r$ is the inscribed radius for $\Omega.$ This result was later generalized to $p-$Laplasian in \cite{zhang2007lower}:
    $$
    \lambda_{1,p}(\Omega) \geq (p-1)\br{\frac{\pi_p}{2r}}^p,\ \ \forall p>1.
    $$
    \par On the one hand, while \cite{zhang2007lower} assumes the boundary $\partial\Omega$ to be mean convex, our Theorem 1.1 instead assumes that the ambient manifold $(M,g)$ is non-compact.
  On the other hand, if we specialize Theorem \ref{thm:1.1} to the case $p=2,$ we obtain the inequality
    $$
    \lambda_{1,2}(\Omega) > \br{\frac{\pi}{2d}}^2,\ \ \ \lambda_{1,2}(B(x,1)) > \br{\frac{\pi}{4}}^2
    $$
    which matches the result in \cite{wang2022sharp}, except that we additionally prove the first inequality is strict.
    \par Traditionally, a powerful and widely used method for estimating the first Dirichlet eigenvalue in the linear case $p=2$ is the gradient estimate for eigenfunctions introduced by Li and Yau \cite{li1980estimates}, and further developed in \cite{1984On}. These methods extend in principle to general  $p>1.$
    \par In \cite{wang2022sharp},  the authors applied a maximum principle to a function similar to the eigenfunction to obtain a $C^0$ estimate for the Dirichlet eigenfunction when $p=2.$ Inspired of their approach, we apply the Barta's trick of $p-$Laplacian to the generalized trigonometric functions to generalize their result to the case $p>1.$

    \section{The sharp estimate of Dirichlet eigenvalue}
    \subsection{Preliminaries}

    Theorem 1.1 from \cite{jin2024lower} provides a convenient tool to derive lower bounds for $\lambda_{1,p}(\Omega)$. We state a special case of this result ($p_1=p_2=p$, $C=p-1$) which will be used in the sequel. This is also equivalent to the well-known Barta’s inequality for $p-$Laplacian. See Theorem 2.1 in \cite{allegretto1998picone}.

    \begin{lemma}[\cite{jin2024lower}]\label{lemma:2.1}
        Let $(M,g)$ be a complete Riemannian manifold and $\Omega \subseteq M$ be a bounded domain. If there exists a function $f \in W^{1,p}_\loc(\Omega)$ and a constant $\mu > 0$ such that
        \begin{equation}\label{eq:2.1}
            \Delta_p f - (p-1)|\nabla f|^p \geq \mu
        \end{equation}
        in the weak $W^{1,p}_{\rm loc}$ sense, then
        \begin{equation}\label{eq:2.2}
            \lambda_{1,p}(\Omega) \geq\mu.
        \end{equation}
        Moreover, if the $\lambda_{1,p}(\Omega)=\mu,$ then $e^{-f}$ is a $p$-eigenfunction corresponding to $\lambda_{1,p}(\Omega).$
    \end{lemma}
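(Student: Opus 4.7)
The plan is to reduce the hypothesis on $f$ to a ``super-$p$-harmonic'' inequality for the positive function $u:=e^{-f}$, and then to invoke Picone's identity for the $p$-Laplacian (Allegretto--Huang \cite{allegretto1998picone}) to read off the eigenvalue bound directly from the variational characterization \eqref{eq:1.1}.

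First I would set $u:=e^{-f}>0$ and, using the chain rule together with $|\nabla u|^{p-2}\nabla u = -u^{p-1}|\nabla f|^{p-2}\nabla f$, verify the pointwise identity
\[
\Delta_p u \;=\; -u^{p-1}\br{\Delta_p f - (p-1)|\nabla f|^p}.
\]
Consequently, \eqref{eq:2.1} is equivalent, in the weak $W^{1,p}_\loc$ sense, to the inequality $-\Delta_p u \geq \mu\, u^{p-1}$ in $\Omega$ with $u>0$. This equivalence is a distributional computation: since $f\in W^{1,p}_\loc$, one mollifies $f$, applies the identity in the smooth category, and passes to the limit using continuity of $\xi\mapsto|\xi|^{p-2}\xi$.

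Next, for an arbitrary $v\in C^\infty_c(\Omega)$, I would test the resulting weak inequality against the candidate function $|v|^p/u^{p-1}$ (truncated, see below) and control the resulting gradient pairing using the Allegretto--Huang form of Picone's identity,
\[
|\nabla v|^p \;\geq\; \nabla\!\br{\frac{|v|^p}{u^{p-1}}}\cdot |\nabla u|^{p-2}\nabla u \quad \text{a.e.},
\]
with equality iff $|v|$ is a constant multiple of $u$. Integrating and combining with $-\Delta_p u \geq \mu u^{p-1}$ yields $\int_\Omega |\nabla v|^p\,\d\upsilon_g \geq \mu\int_\Omega |v|^p\,\d\upsilon_g$. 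Density of $C^\infty_c(\Omega)$ in $W^{1,p}_0(\Omega)$ and the characterization \eqref{eq:1.1} then give \eqref{eq:2.2}. For the equality case, if $\lambda_{1,p}(\Omega)=\mu$ is attained by a normalized minimizer $v_0\in W^{1,p}_0(\Omega)$, equality must hold a.e.\ in Picone's identity; the rigidity clause forces $v_0 = c\,u$ a.e., so $u=e^{-f}$ is itself a $p$-eigenfunction.

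The chief technical obstacle is the admissibility of the test function $|v|^p/u^{p-1}$: since $f$ is only of class $W^{1,p}_\loc$, the positive function $u$ need not be bounded above or bounded away from $0$, and this quotient may fail to lie in $W^{1,p}_0(\Omega)$. The standard remedy is to replace $u$ by its positive truncation $u_\varepsilon:=u+\varepsilon$; then $|v|^p/u_\varepsilon^{p-1}$ is admissible on the compact support of $v$, the Picone identity holds for the pair $(v,u_\varepsilon)$, and $-\Delta_p u_\varepsilon = -\Delta_p u \geq \mu u^{p-1}$ remains valid. Sending $\varepsilon\to 0^+$ via monotone/dominated convergence on $\supp v$ recovers the desired inequality and completes the argument.
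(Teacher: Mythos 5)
Your core inequality is the same as the paper's, just packaged differently. The paper tests the hypothesis \eqref{eq:2.1} directly against the admissible function $|v|^p$ for $v\in C_0^\infty(\Omega)$ and then applies Cauchy--Schwarz followed by Young's inequality to the term $-p|v|^{p-1}\,g(\nabla|v|,|\nabla f|^{p-2}\nabla f)$; that two-step estimate is precisely Picone's identity for the pair $(v,e^{-f})$ written in the variable $f=-\ln u$, and the equality analysis ($\nabla f$ parallel to $\nabla v$ and $|v|\,|\nabla f|=|\nabla v|$, hence $v=c\,e^{-f}$) is the same rigidity clause you invoke. The paper itself records this equivalence with Allegretto--Huang's Theorem 2.1. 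So the mathematical content of your argument is right and the conclusion, including the equality case, is the one the paper reaches.

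The genuine weak point is your reduction step, which the paper's formulation deliberately avoids. Passing from \eqref{eq:2.1} to ``$-\Delta_p u\geq\mu u^{p-1}$ weakly'' for $u=e^{-f}$ is not a harmless change of variables when $f$ is only in $W^{1,p}_{\loc}(\Omega)$: the exponential of a Sobolev function need not be Sobolev, so $u$ and $|\nabla u|^{p-1}=u^{p-1}|\nabla f|^{p-1}$ may fail to be locally $p$- respectively $q$-integrable, and the natural test function $\varphi=u^{p-1}\psi$ for the $f$-inequality may not be admissible. Your proposed fixes do not close this: mollifying $f$ does not preserve the differential inequality \eqref{eq:2.1} because $\Delta_p$ is nonlinear in $\nabla f$, and the truncation $u_\varepsilon=u+\varepsilon$ only controls $u$ from \emph{below}, whereas the problematic regime is where $f\to-\infty$ and $u$ blows up. One would also need an upper truncation $\min(u,M)$, and then the supersolution property degenerates on $\{u>M\}$. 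The paper's route --- never forming $e^{-f}$ in the inequality part, and only exponentiating at the very end in the equality discussion --- is what makes the lemma rigorous at the stated $W^{1,p}_{\loc}$ generality. If you want to keep the Picone formulation, the cleanest repair is to observe that Picone's pointwise inequality, expressed in terms of $f$ rather than $u$, reads
\begin{equation*}
|\nabla v|^p \;\geq\; -p|v|^{p-1}\,g\!\br{\nabla|v|,\,|\nabla f|^{p-2}\nabla f}-(p-1)|v|^p|\nabla f|^p ,
\end{equation*}
which needs no division by $u$ and no truncation --- at which point you have reproduced the paper's proof verbatim.
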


    \begin{proof}
        Since \cite{jin2024lower} does not prove the equality case, we briefly recall the argument for completeness.
        For any $v \in C_0^\infty(\Omega)$, we have
        \begin{align}
            \mu \int_\Omega |v|^p \,\d\upsilon_g &\leq  -p\int_\Omega |v|^{p-1} g(\nabla |v|, |\nabla f|^{p-2}\nabla f)\,\d\upsilon_g - (p-1) \int_\Omega |v|^p   |\nabla f|^p \,\d\upsilon_g\nonumber \\
            &\leq p \int_\Omega |v|^{p-1}  |\nabla v| \cdot |\nabla f|^{p-1}\,\d\upsilon_g - (p-1) \int_\Omega |v|^p   |\nabla f|^p\,\d\upsilon_g \label{eq:ineq-cauchy} \\
            &\leq p \int_\Omega \br{\frac{(|v|^{p-1}|\nabla f|^{p-1})^q}{q} + \frac{|\nabla v|^p}{p}}\,\d\upsilon_g - (p-1)\int_\Omega |v|^p   |\nabla f|^p \,\d\upsilon_g \label{eq:ineq-young} \\
            &= \int_\Omega |\nabla v|^p\,\d\upsilon_g. \nonumber
        \end{align}
        where $1/p+1/q=1$, and the last inequality is due to the Young's inequality. Notice that $W^{1,p}_0(\Omega)$
        is the completion of $C^\infty_0(\Omega)$ with respect to the Sobolev norm
        $$
        \|v\|_{1,p}=\Big(\int_\Omega (|v|^p+|\nabla v|^p)\d\upsilon_g\Big)^{\frac 1p}
        $$
         then
        the above inequality also holds for all $v\in  W^{1,p}_0(\Omega).$ Thus we obtain
        \[
            \lambda_{1,p}(\Omega) \geq \mu.
        \]
        by \eqref{eq:1.1}.
        If $\lambda_{1,p}(\Omega)=\mu,$ then the equality in \eqref{eq:2.1}, \eqref{eq:ineq-cauchy} and \eqref{eq:ineq-young} would be achieved when $v$ is a $p$-eigenfunction of $\lambda_{1,p}(\Omega).$ As a consequence,
        \[
            \nabla f \ \text{is parallel to}\ \nabla v, \quad |v|^p|\nabla f|^p = |\nabla v|^p.
        \]
        almost everywhere in $\Omega.$ Therefore
        \[
            \nabla f = -C_1 \nabla \ln v.
        \]
        for some constant $C_1.$
        This implies $f = -C_1 \ln v - C_2$, and thus $e^{-f}=C_1 e^{C_2} v$ is also a $p$-eigenfunction.
    \end{proof}

    Before we start to prove the main theorem, let us introduce the generalized trigonometric functions. For the following conclusion about generalized trigonometric functions, one may refer to \cite{dosly2005half}. Suppose $p>1,$ let
    \[
        \arcsin_p(x) := \int_0^x \frac{1}{(1-t^p)^{\frac{1}{p}}} \,\d t, \quad x \in [0,1],
    \]
    and
    \[\pi_p := 2\arcsin_p(1) = \frac{2\pi}{p \sin \frac{\pi}{p}}.\]
    Define $\sin_p(t)$ to be the inverse function of $\arcsin_p$ for $t\in [0,\frac{\pi_p}{2}]$ and set
    $$\sin_p(t)=\sin_p(\pi_p-t),\ \ t\in[\frac{\pi_p}{2},\pi_p]
    $$
    Then we extend $\sin_p(t)$ to be the odd $2\pi_p$ period function defined on the whole line $\mathbb{R}.$ It is easy to check that
    $\sin_p(t)\in C^1(\R)$ as $(\sin_p)'(\frac{\pi_p}{2})=0$ and $(\sin_p)'(\pi_p)=-1.$
    For $p=2,\ \sin_p(t)=\sin t$ and $\pi_2=\pi.$ For $p\neq 2,$ the function is not sinusoidal, but maintains similar qualitative oscillatory behavior. Furthermore, let $\cos_p(t) := (\sin_p)'(t),$ then we have the following generalized Pythagorian identity:
    \begin{equation}\label{eq:sinp}
        |\sin_p(t)|^p+|\cos_p(t)|^p = 1,\ \ \forall t\in \mathbb{R}
    \end{equation}
    We also introduce the cotangent functions $\cot_p(t)=\frac{\cos_p(t)}{\sin_p(t)}$ which satisfies that
    $$
    (\cot_p)'(t)=-|\cot_p(t)|^{2-p}-|\cot_p(t)|^{2}
    $$
    according to the generalized Pythagorian identity.
    \begin{lemma}\label{lem:2.2}
        The Euler Beta function
        \[
            \mathrm{B}(\alpha,\beta) = \int_0^1 x^{\alpha-1}(1-x)^{\beta-1}\,\d x
        \]
        can be represented as
        \[
            \mathrm{B}(\alpha,\beta) = p \int_0^{\frac{\pi_p}{2}} \big(\sin_p(t)\big)^{p \alpha - 1} \big(\cos_p(t)\big)^{p\beta-p+1} \,\d t.
        \]
    \end{lemma}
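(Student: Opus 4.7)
My plan is to prove this identity by a direct change of variables in the standard integral representation of the Beta function. Specifically, I will use the substitution $x = (\sin_p t)^p$ on the interval $t \in [0, \pi_p/2]$.

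First, I would verify the substitution is well-defined and surjective onto $[0,1]$: at $t=0$ we have $\sin_p 0 = 0$, and at $t = \pi_p/2$ we have $\sin_p(\pi_p/2) = 1$, and $\sin_p$ is monotonically increasing on this interval (since its derivative $\cos_p$ is nonnegative there). Next, I would compute the differential: since $(\sin_p)' = \cos_p$, the chain rule gives
\[
    \d x = p (\sin_p t)^{p-1} \cos_p t \, \d t.
\]
The key algebraic ingredient is the generalized Pythagorean identity \eqref{eq:sinp}, which on $[0, \pi_p/2]$ (where both $\sin_p$ and $\cos_p$ are nonnegative) yields
\[
    1 - x = 1 - (\sin_p t)^p = (\cos_p t)^p.
\]

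Substituting into the defining integral, I would then compute
\begin{align*}
    \mathrm{B}(\alpha,\beta) &= \int_0^1 x^{\alpha-1}(1-x)^{\beta-1}\,\d x \\
    &= \int_0^{\pi_p/2} (\sin_p t)^{p(\alpha-1)} (\cos_p t)^{p(\beta-1)} \cdot p(\sin_p t)^{p-1} \cos_p t \, \d t \\
    &= p \int_0^{\pi_p/2} (\sin_p t)^{p\alpha - 1} (\cos_p t)^{p\beta - p + 1} \, \d t,
\end{align*}
which is exactly the claimed formula. There is no real obstacle here: the result follows immediately from the substitution once one has the Pythagorean identity and the formula $(\sin_p)' = \cos_p$ at hand. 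The only minor point worth noting is the sign positivity on $[0, \pi_p/2]$, which justifies taking $p$-th roots without absolute values and matches the exponent bookkeeping in the final expression.
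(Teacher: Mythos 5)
Your proposal is correct and uses exactly the same substitution $x = (\sin_p t)^p$ that the paper's one-line proof invokes; the exponent bookkeeping and the use of the generalized Pythagorean identity are all as intended. You have simply written out in full the details the paper leaves to the reader.
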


    \begin{proof}
        By changing the variable $x = \big(\sin_p(t)\big)^p$, the lemma is easily proved.
    \end{proof}
\subsection{Proof of the main results}
    \begin{proof}[Proof of Theorem \ref{thm:1.1}.]
        Let $o\in\Omega$ be a fixed point and there exists a ray $$\gamma \colon [0,+\infty) \to M$$ starting from $o$ as $M$ is non-compact.
      We consider the Busemann function with respect to $\gamma$,
    \[
        \beta_\gamma \colon M \to \R, \quad \beta_\gamma(z) := \lim_{t \to \infty} (t - d(z,\gamma(t))).
    \]
    For the simplicity of notations, we denote $\beta = \beta_\gamma$. By Busemann's book \cite{busemann1955}, we have $|\nabla \beta| = 1$ almost everywhere, therefore
    $$
    |\beta(x)-\beta(y)|\leq  d,\ \ \forall x,y\in\Omega,
    $$
    where $d$ is the diameter of $\Omega$. So there exists a constant $a\in\mathbb{R},$ such that
    $$
    \beta(x)\in [a,a+d],\ \ \forall x\in\overline{\Omega}.
    $$
    The Busemann function $\beta$ cannot be a constant function in $\Omega$ as $|\nabla \beta|=1$ almost everywhere. Furthermore, since $\mathrm{Ric} \geq 0$, we have that $\Delta \beta \geq 0$ in the sense of distribution by the Laplace Comparison Theorem. Then $\beta$ cannot attain its maximum in the interior of $\Omega.$ Hence
    $$
    \beta(x)\in [a,a+d),\ \ \forall x \in\Omega.
    $$
       \par Now let us define the test function
        $$f(\cdot) = -\ln \sin_p \br{A\big(a+d-\beta(\cdot)\big)}
        \ \ {\rm with}\ A=\frac{\pi_p}{2d}$$
        on $\Omega\subseteq\beta^{-1}[a,a+d).$
         By the properties of the $p-$sine function we obtain that
        \begin{align}
            \dot f(\beta) &= A \cot_p\br{A(a+d-\beta)} > 0, \nonumber\\
            \ddot f(\beta) &= A^2 \Big(\cot_p\big(A(a+d-\beta)\big)\Big)^{2-p} + A^2\Big(\cot_p\big(A(a+d-\beta)\big)\Big)^2 \nonumber \\
            &= A^p \dot f(\beta)^{2-p} + \dot f(\beta)^2. \label{eq:2.5}
        \end{align}
        Recall that $\Delta \beta \geq 0$ in distributional sense and together with \eqref{eq:2.5} and that $|\nabla \beta| = 1$ almost everywhere \cite{busemann1955}, we obtain that for all $v \in C^\infty_0(\Omega)$,
        \begin{align}
            \int_\Omega v \Delta_p f &= \int_\Omega v\Div \br{|\nabla f|^{p-2}\nabla f} \nonumber \\
            &= \int_\Omega v \Div (\dot f(\beta)^{p-1}|\nabla \beta|^{p-2} \nabla \beta) \nonumber \\
            &= \int_\Omega v \Div(\dot f(\beta)^{p-1}\nabla \beta) \nonumber \\
            &= -\int_\Omega \dot f(\beta)^{p-1} g(\nabla \beta, \nabla v) \nonumber  \\
            &= \int_\Omega \br{\dot f(\beta)^{p-1} \Delta \beta + (p-1) \dot f(\beta)^{p-2} \ddot f(\beta) |\nabla \beta|^2} v \nonumber \\
            &\geq \int_\Omega (p-1)\ddot f(\beta)\dot f(\beta)^{p-2} v \label{eq:lap-b-geq-0} \\
            &= \int_\Omega (p-1)\br{A^p + \dot f(\beta)^{p}} v. \nonumber
        \end{align}
        Therefore,
        \[
            \Delta_p f- (p-1)|\nabla f|^p \geq (p-1)A^p
        \]
       in the sense of distributional. Applying Lemma \ref{lemma:2.1}, we derive that
        \[
            \lambda_{1,p}(\Omega) \geq (p-1)A^p = (p-1)\br{\frac{\pi_p}{2d}}^p.
        \]

        We now show this inequality is strict. If the equality holds, then by the last statement in Lemma \ref{lemma:2.1}, $e^{-f} = \sin_p (A(a+d-\beta))$ must be a $p$-eigenfunction. This would imply that $\beta|_{\partial \Omega} \equiv a + d$. Furthermore, $\beta$ is harmonic in $\Omega$ due to the equality \eqref{eq:lap-b-geq-0}. Hence $\beta$ would have to be a constant in $\Omega,$
          contradicting $|\nabla\beta|\equiv 1$ on $\gamma\cap\Omega.$ Therefore, the inequality is strict.
        \\
        \par To prove the estimate is sharp, let $\eps > 0$ be a small positive number and we consider a smooth bump function
        \[
            j_\eps(r) = \left\{\begin{array}{ll}
                e^{\frac{1}{\eps^2}}e^{\frac{1}{r^2-\eps^2}}, & \quad 0 \leq r < \eps, \\
                0, & \quad r \geq \eps.
            \end{array}\right.
        \]
        Define
        \[
            f_\eps(r) = \int_0^r j(t) \,\d t,\ \ r\geq 0.
        \]
         and construct the warped product manifold
         $$(\R^n, g_\eps=  \d r^2 + f_\eps(r)^2 \d \mathbb{S}^{n-1}).$$
         This metric is smooth as $\dot f_\eps(0)=1$ and $f^{({\rm even})}_\eps(0)=0.$ Furthermore, since  $\ddot f_\eps \leq 0,$ we have that $\operatorname{Ric} g_\eps \geq 0$.
          \par Let $o$ be the origin of $\R^n$ and take $z_\eps = \exp_o((1-\delta)\theta)$ where $\theta \in \mathbb{S}^{n-1}$ and $\delta = f_\eps(\eps) < \eps$. We claim that
        \[
            B(o,2-2 \pi \delta-\delta) \subseteq B(z_\eps,1).
        \]
        For any $x \in B(o,2-2 \pi \delta-\delta)$, we write $x = \exp_o (r_x \theta_x)$ for some $r_x \leq 2 - 2 \pi \delta - \delta$ and $\theta_x \in \mathbb{S}^{n-1}$. Let $y = \exp_o((1-\delta)\theta_x)$, we have
        \[
            d(x,z_\eps) \leq d(x,y) + d(y,z_\eps) \leq 1 - 2\pi\delta + \pi \delta < 1.
        \]
        Thus the claim is proved.
        \par Denote $r = 2 - 2 \pi \delta - \delta$ and $\tau = \frac{\pi_p}{2r}.$ Define the function
        $$\fai(x) = \sin_p \big(\tau(r-d(x,o))\big) \in W^{1,p}_0(B(o,r)).
        $$ Then by \eqref{eq:1.1},
        \begin{align*}
            \lambda_{1,p}(B(z_\eps,1)) &\leq \lambda_{1,p}(B(o,2-2 \pi \delta-\delta))
            \\ &
             \leq \frac{\int_{B(o,r)}|\nabla \fai|^p \, \d\upsilon_{g_\eps}}{\int_{B(o,r)}|\fai|^p \, \d\upsilon_{g_\eps}} \\
            &= \frac{\tau^p\int_0^r \Big(\cos_p(\tau(r-s))\Big)^p \omega_{n-1}f_\eps(s)^{n-1} \, \d s}{\int_0^r \Big(\sin_p(\tau(r-s))\Big)^p \omega_{n-1}f_\eps(s)^{n-1} \, \d s} \\
             &\leq \frac{ \tau^p\int_0^r \Big(\cos_p(\tau(r-s))\Big)^p \omega_{n-1}f_\eps(\eps)^{n-1} \, \d s}{\int_\eps^r \Big(\sin_p(\tau(r-s))\Big)^p \omega_{n-1}f_\eps(\eps)^{n-1} \, \d s} \\
            &\leq \frac{\tau^p\int_0^{\pi_p/2} \big(\cos_p(t) \big)^p\, \d t}{\int_{0}^{\tau(r-\eps)} \big(\sin_p(t)\big)^p \, \d t}.
        \end{align*}
        Let $\eps \to 0$ and according to  Lemma \ref{lem:2.2}, we obtain that
        \begin{align*}
           \limsup_{\eps \to 0} \lambda_{1,p}(B(z_\eps,1)) &\leq \br{\frac{\pi_p}{4}}^p \frac{\int_0^{\pi_p/2} (\cos_p)^p(t) \, \d t}{\int_{0}^{\pi_p/2} (\sin_p)^p(t) \, \d t}
           \\ & =\br{\frac{\pi_p}{4}}^p \frac{\mathrm{B}\br{\frac{1}{p},\frac{2p-1}{p}}}{\mathrm{B}\br{\frac{p+1}{p},\frac{p-1}{p}}}
           \\ &= (p-1)\br{\frac{\pi_p}{4}}^p.
        \end{align*}
    On the other hand, due to Theorem \ref{thm:1.1}, we also have that
        $$
          \liminf_{\eps \to 0} \lambda_{1,p}(B(z_\eps,1)) \geq \liminf_{\eps \to 0} (p-1)\br{\frac{\pi_p}{2d_\eps}}^p=(p-1)\br{\frac{\pi_p}{4}}^p
        $$
        where $d_\eps$ denotes the diameter of $B(z_\eps,1)$. Notice that $\exp_o((2-\delta)\theta) \in B_1(z_\eps)$ as $z_\eps = \exp_o((1-\delta)\theta)$. Therefore
        \[
            2-\delta \leq d(B_1(z_\eps)) \leq 2,
        \]
        which shows that $d(B_1(z_\eps)) \to 2$ as $\eps\rightarrow 0.$
        In the end, we obtain that
        $$
         \lim_{\eps \to 0} \lambda_{1,p}(B(z_\eps,1)) =(p-1)\br{\frac{\pi_p}{4}}^p,\ {\rm with} \ \lim_{\eps \to 0}d(B_1(z_\eps)) \to 2.
        $$
        Thus we get the sharpness Theorem \ref{thm:1.1} as long as we consider a sequence of Riemannian manifolds and bounded domains:
        $$
        (\R^n,\bar{g}_\eps)=\Big(\R^n,\big(\frac{d}{d_\eps}\big)^2g_\eps\Big),\ \ \Omega_\eps=B_{g_\eps}(z_\eps,1).
        $$
        Then
        $$ {\rm diam}( \Omega_\eps,\bar{g}_\eps)=\frac{d}{d_\eps}{\rm diam}( \Omega_\eps,g_\eps)=d
        $$
        and
        $$
        \lambda_{1,p}(\Omega_\eps,\bar{g}_\eps)=\Big(\frac{d}{d_\eps}\Big)^{-p} \lambda_{1,p}(\Omega_\eps,g_\eps)\rightarrow (p-1)\br{\frac{\pi_p}{2d}}^p,\ {\rm as}\ \eps\rightarrow 0.
        $$
    \end{proof}

    \begin{remark}
    \begin{itemize}
      \item In \cite{wang2022sharp}, the authors proved the sharpness of the lower bound in the case $p=2$ by constructing a sequence of manifolds to approximate the bound. We point out that there is a trivial flaw in their construction as $g_i$'s in the proof of Theorem 2.3 in \cite{wang2022sharp} are not smooth at $o.$ Here we fixed this detail.
      \item The condition that $M$ is non-compact cannot be dropped. Let us consider a sequence of manifolds $M_\eps = ([0,\pi-\eps] \times \mathbb{S}^{n-1}, \d r^2 + \sin^2 r g_{\mathbb{S}^{n-1}})$ embedded in $S^n$ with small $\eps$ and the diameter of $M_\eps$ is 2.
            Assume $p<n$ and recall that
    \[
      \lambda_{1,p}(M_\eps) = \inf_{v \in W^{1,p}_0(M_\eps)\setminus\{0\}} \frac{ \int_{M_\eps} |\nabla v|^p \d v_g}{ \int_{M_\eps} |v|^p \d v_g}.
    \]
    For each $\eps$, we define a function $f_\eps(r) \in W^{1,p}_0(M_\eps)$ on $M_\eps$ which depends only on the distance $r$ of $o \in [0,\pi-\eps] \times \mathbb{S}^{n-1}$ as follows
    \[
      f_\eps(r) = \left\{\begin{array}{ll}
        1, & \quad r \in [0,\pi-2\eps], \\
        \frac{\pi-\eps-r}{\eps}, & \quad r \in [\pi-2\eps,\pi-\eps].
      \end{array}\right.
    \]
    Then
    \begin{align*}
      \lambda_{1,p}(M_\eps) &\leq \frac{ \int_{M_\eps} |\nabla (f_\eps(r))|^p \d v_g}{ \int_{M_\eps} f_\eps(r)^p \d v_g} \\
      &= \frac{\int_0^{\pi-\eps} \dot f_\eps(r)^p \sin^{n-1} r \,\d r}{\int_0^{\pi-\eps} f_\eps(r)^p \sin^{n-1} r \,\d r} \\
      &\leq C \int_{\pi-2\eps}^{\pi-\eps} \eps^{-p} \sin^{n-1} r \,\d r \\
      &\leq C \int_\eps^{2\eps} \eps^{-p} r^{n-1} \,\d r \\
      &\leq C \frac{2^n-1}{n}\eps^{n-p},
    \end{align*}
    where $C = \br{\int_0^\pi \sin^{n-1} r \,\d r}^{-1} > 0$. Therefore $\lambda_{1,p}(M_\eps) \to 0$ as $\eps \to 0$.

    Now assume $p=n$, we define
    \[
      f_\eps(r) = \left\{\begin{array}{ll}
        1, & \quad r \in [0,\pi/2], \\
        \frac{\ln(\pi-r)-\ln \eps}{\ln(\pi/2)-\ln \eps}, & \quad r \in [\pi/2,\pi-\eps].
      \end{array}\right.
    \]
    Therefore
    \begin{align*}
      \lambda_{1,n}(M_\eps) &\leq \frac{\int_0^{\pi-\eps} \dot f_\eps(r)^n \sin^{n-1} r \,\d r}{\int_0^{\pi-\eps} f_\eps(r)^n \sin^{n-1} r \,\d r} \\
      &\leq C \int_{\pi/2}^{\pi-\eps} (\ln(\pi/2)-\ln \eps)^{-n} \br{\frac{1}{\pi-r}}^n \sin^{n-1} r\,\d r \\
      &= C \frac{\int_{\eps}^{\pi/2} \frac{\sin^{n-1}r}{r^n} \,\d r}{(\ln(\pi/2)-\ln \eps)^{n}}\\
      &\rightarrow 0 \ \text{as}\ \eps\rightarrow0.
    \end{align*}
    Here $C>0$ is a lower bound for $\int_0^{\pi-\eps} f_\eps(r)^n \sin^{n-1} r \,\d r$ which is independent of $\eps$.  Recall that Theorem \ref{thm:1.1} gives a uniform positive lower bound for $\lambda_{1,p}(M_\eps)$, then Corollary \ref{coro:1.3} is proved provided we take $D=M_\eps.$

      \item Recall the relationship between  $\lambda_{1,p}$ and the Maz'ya constant of $\Omega$ in  \cite{maz2003lectures, grigor1999isoperimetric}:
      $$
      \lambda_{1,p}(\Omega)\leq m_p(\Omega):=\inf_{K\subseteq\Omega}
      \frac{{\rm cap}_p(K,\Omega)}{{\rm vol}(K)}
      $$
      where ${\rm cap}_p(K,\Omega)$ is the $p-$capacity of $(K,\Omega).$ Then under the conditions of Theorem \ref{thm:1.1}, and for any compact smooth domain $K$ in $\Omega,$
      $$
      (p-1)\Big(\frac{\pi_p}{2{\rm diam}(\Omega)}\Big)^p {\rm vol}(K) < {\rm cap}_p(K,\Omega)=\inf\Big\{\int_{\Omega}|\nabla f|^p\,\d\upsilon_g:\ C_0^1(\Omega)\ni f\ge 1_K\Big\}
      $$
      Let $p\rightarrow 1,$ and we obtain that
      $$
      \frac{{\rm vol}(K)}{{\rm diam}(\Omega)}\leq {\rm cap}_1(K,\Omega)=\inf \big\{{\rm vol}(\partial O):\ K\subseteq\text{smooth}\ O\subseteq \Omega\big\}\leq {\rm vol}(\partial K).
      $$
      Hence we have the following estimate of the Cheeger constant $h(\Omega).$
      $$
      h(\Omega):= \inf_{K\subseteq\Omega}\frac{{\rm vol}(\partial K)}{{\rm vol}(K)}\geq  \frac{1}{{\rm diam}(\Omega)}
      $$
      and we can show that the estimate is also sharp by considering the warped product manifold as above.
      One can see \cite{kawohl2003isoperimetric} for more information about $h(\Omega)$ and $\lambda_{1,p}(\Omega).$
      \item Let $p\rightarrow\infty,$ then under the conditions of Theorem \ref{thm:1.1},
    $$
      \lambda_{1,\infty}(\Omega):=\lim_{p \to\infty} \lambda_{1,p}(\Omega)^{\frac 1p} \geq \frac{1}{{\rm diam}(\Omega)}
    $$
    Here $\lambda_{1,\infty}(\Omega)$
    is related to the so-called $\infty-$Laplacian, one can see Remark 3.4 in \cite{xiao2009p} for more details.
    
    \end{itemize}

    \end{remark}

\noindent{Xiaoshang Jin}\\
  School of mathematics and statistics, Huazhong University of science and technology, Wuhan, P.R. China. 430074
 \\Email address: jinxs@hust.edu.cn
\bigskip \\
\noindent{Zhiwei L\"u}\\
  School of mathematics and statistics, Huazhong University of science and technology, Wuhan, P.R. China. 430074
 \\Email address: m202470005@hust.edu.cn

\end{document}